\renewcommand{\citepunct}{;\penalty\citemidpenalty\ }
\numberwithin{equation}{section}
\newtheorem{lemma}[equation]{Lemma}
\newtheorem{proposition}[equation]{Proposition}
\newtheorem{alphthm}{Theorem}
\newtheoremstyle{cited}{.5\baselineskip\@plus.2\baselineskip\@minus.2\baselineskip}{.5\baselineskip\@plus.2\baselineskip\@minus.2\baselineskip}{\itshape}{}{\bfseries}{\bfseries .}{5pt plus 1pt minus 1pt}{\thmname{#1}\thmnumber{ #2}\thmnote{ \normalfont#3}}
\theoremstyle{cited}
\newtheorem{citedprop}[equation]{Proposition}
\theoremstyle{definition}
\newtheorem{axiom}[equation]{Axiom}
\newtheorem{notation}[equation]{Notation}
\newtheoremstyle{citeddef}{.5\baselineskip\@plus.2\baselineskip\@minus.2\baselineskip}{.5\baselineskip\@plus.2\baselineskip\@minus.2\baselineskip}{}{}{\bfseries}{\bfseries .}{5pt plus 1pt minus 1pt}{\thmname{#1}\thmnumber{ #2}\thmnote{ \normalfont#3}}
\theoremstyle{citeddef}
\newtheorem{citeddef}[equation]{Definition}
\newtheorem{citedaxiom}[equation]{Axiom}
\newtheorem{citedaxioms}[equation]{Axioms}
\theoremstyle{remark}
\newtheorem{remark}[equation]{Remark}
\newtheorem{examples}[equation]{Examples}
\newtheoremstyle{step}{.25\baselineskip\@plus.1\baselineskip\@minus.1\baselineskip}{.25\baselineskip\@plus.1\baselineskip\@minus.1\baselineskip}{\itshape}{}{\bfseries}{\bfseries .}{5pt plus 1pt minus 1pt}{\thmname{#1}\thmnumber{ #2}\thmnote{ \normalfont(#3)}}
\theoremstyle{step}
\DeclareMathOperator{\Ass}{Ass}
\DeclareMathOperator{\Ext}{Ext}
\DeclareMathOperator{\Frac}{Frac}
\DeclareMathOperator{\Hom}{Hom}
\DeclareMathOperator{\Sym}{Sym}
\DeclareMathOperator{\im}{im}
\newcommand{\QQ}{\mathbf{Q}}
\newcommand{\ZZ}{\mathbf{Z}}
\newcommand{\cl}{\mathsf{cl}}
\newcommand{\epf}{\mathsf{epf}}
\newcommand{\ronef}{\mathsf{r1f}}
\newcommand{\wepf}{\mathsf{wepf}}
\newcommand{\fm}{\mathfrak{m}}
\newcommand{\fp}{\mathfrak{p}}
\providecommand\given{}
\newcommand\SetSymbol[1][]{\nonscript\:#1\vert\allowbreak\nonscript\:\mathopen{}}
\DeclarePairedDelimiterX\Set[1]\{\}{\renewcommand\given{\SetSymbol[\delimsize]}#1}
\begin{document}
\title[Closure-theoretic proofs of uniform bounds on symbolic powers in
regular rings]{Closure-theoretic proofs of\\uniform bounds on symbolic powers
in regular rings}
\author{Takumi Murayama}
\address{Department of Mathematics\\Princeton University\\Princeton, NJ
08544-1000\\USA}
\email{\href{mailto:takumim@math.princeton.edu}{takumim@math.princeton.edu}}
\urladdr{\url{https://web.math.princeton.edu/~takumim/}}

\thanks{This material is based upon work supported by the National Science
Foundation under Grant No.\ DMS-1902616}
\subjclass[2020]{Primary 13A15; Secondary 13H05, 14G45, 13A35, 13C14}

\keywords{uniform bounds, symbolic powers, regular rings,
closure operations, big Cohen--Macaulay algebras}

\makeatletter
  \hypersetup{
    pdfsubject=\@subjclass,pdfkeywords=\@keywords
  }
\makeatother

\begin{abstract}
  We give short, closure-theoretic proofs for uniform bounds on the growth of
  symbolic powers of ideals in regular rings.
  The author recently proved these bounds in mixed characteristic
  using various versions of perfectoid/big Cohen--Macaulay test ideals, with
  special cases obtained earlier by Ma and Schwede.
  In mixed characteristic, we instead use
  Heitmann's full extended plus ($\mathsf{epf}$) closure, Jiang's weak
  $\mathsf{epf}$ ($\mathsf{wepf}$) closure, and R.G.'s results on closure
  operations that induce big Cohen--Macaulay algebras.
  Our strategy also applies to any Dietz closure satisfying R.G.'s algebra
  axiom and a Brian\c{c}on--Skoda-type theorem, and hence
  yields new proofs of these results on uniform bounds on the
  growth of symbolic powers of ideals in regular rings of all
  characteristics.
  In equal characteristic, these results on symbolic powers are due to
  Ein--Lazarsfeld--Smith, Hochster--Huneke, Takagi--Yoshida, and Johnson.
\end{abstract}

\maketitle

\section{Introduction}\label{sect:intro}
Let $R$ be a Noetherian ring, and let $I \subseteq R$ be an ideal.
Following \cite[p.\ 349]{HH02},
for all integers $n \ge 1$, the \textsl{$n$-th symbolic power} of $I$ is the
ideal
\begin{equation}\label{eq:hhsymbdef}
  I^{(n)} \coloneqq I^nR_W \cap R,
\end{equation}
where $R_W$ is the localization of $R$ with respect to the multiplicative set $W
= R - \bigcup_{\fp \in \Ass_R(R/I)} \fp$.\medskip
\par In this paper, we give a short, closure-theoretic proof of the following
comparison between ordinary
and symbolic powers of ideals in regular rings.
Our proof strategy works in all characteristics.
\begin{alphthm}\label{thm:mainelshhms}
  Let $R$ be a regular ring, and let $I \subseteq R$ be an ideal.
  Let $h$ be the largest analytic spread of the ideals $IR_\fp \subseteq R_\fp$,
  where $\fp$ ranges over all associated primes $\fp$ of $R/I$.
  Then, for all $n \ge 1$, we have
  \begin{equation}\label{eq:elsbound}
    I^{(hn)} \subseteq I^n.
  \end{equation}
\end{alphthm}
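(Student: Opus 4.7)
My plan is to establish the stronger closure containment $I^{(hn)} \subseteq (I^n)^\cl$ for an appropriately chosen Dietz closure operation $\cl$, and then to appeal to the triviality of $\cl$ on ideals of the regular ring $R$ to deduce \eqref{eq:elsbound}. In equal characteristic one may take $\cl$ to be tight closure (or its characteristic-zero big Cohen--Macaulay analogue); in mixed characteristic $\cl$ will be one of Heitmann's $\epf$, Jiang's $\wepf$, or an R.G.-style big Cohen--Macaulay closure. In every case, $\cl$ is required to be a Dietz closure satisfying R.G.'s algebra axiom together with a Briançon--Skoda-type inclusion $\overline{K^m} \subseteq (K^{m-\ell+1})^\cl$ whenever $K$ is an ideal generated by at most $\ell$ elements.

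The heart of the argument is a reduction-plus-Briançon--Skoda computation. For each associated prime $\fp$ of $R/I$, the analytic-spread hypothesis produces a minimal reduction $J_\fp \subseteq IR_\fp$ generated by at most $h$ elements, with $IR_\fp \subseteq \overline{J_\fp}$; if necessary, I would first pass to $R_\fp(t)$ to ensure an infinite residue field. Raising to the $hn$-th power and applying the Briançon--Skoda-type containment for $\cl$ yields
\[
  (IR_\fp)^{hn} \;\subseteq\; \overline{J_\fp^{hn}} \;\subseteq\; \bigl(J_\fp^{hn-h+1}\bigr)^\cl \;\subseteq\; \bigl(I^n R_\fp\bigr)^\cl,
\]
where the last containment uses $hn - h + 1 \ge n$ for $n \ge 1$. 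By the definition \eqref{eq:hhsymbdef} of the symbolic power, any $f \in I^{(hn)}$ maps into $I^{hn} R_\fp$, and hence into $(I^n R_\fp)^\cl$, for every $\fp \in \Ass_R(R/I)$.

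The main obstacle, and the step in which R.G.'s algebra axiom is decisive, is the passage from these local closure-memberships to the global statement $f \in (I^n)^\cl$. The algebra axiom allows the local minimal reductions $J_\fp$ to be packaged into a single big Cohen--Macaulay $R$-algebra extension $B$ in which the Briançon--Skoda computation can be run uniformly across all associated primes, yielding $f \in I^n B$ and therefore $f \in (I^n)^\cl$. Triviality of $\cl$ on the regular ring $R$ then gives $(I^n)^\cl = I^n$, completing the proof.
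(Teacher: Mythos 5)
Your overall skeleton (Brian\c{c}on--Skoda for a suitable Dietz closure plus triviality of the closure on a regular ring) is in the right spirit, but the step you label ``the main obstacle'' is a genuine gap, not a technicality, and the mechanism you offer for it does not exist. Your containments are established inside $R_\fp$, i.e.\ you need a closure operation on each localization and then a way to pass from $f \in (I^nR_\fp)^\cl$ for all $\fp \in \Ass_R(R/I)$ to $f \in (I^n)^\cl$ in $R$. In mixed characteristic this is precisely the problem the paper is designed to circumvent: $\epf$ closure does not localize, and $\wepf$/$\ronef$ are only known to be Dietz closures satisfying the algebra axiom on complete local domains with $F$-finite residue field. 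R.G.'s algebra axiom does not ``package local minimal reductions'': its only output (Proposition \ref{prop:rg41}) is that, for a single Noetherian local domain, the closure is contained in $\cl_B$ for one big Cohen--Macaulay algebra $B$; it provides no local-to-global gluing and no way to deduce $f \in I^nB$ from closure memberships over the various $R_\fp$. Moreover, even granting perfect localization and triviality of $\cl$ on regular local rings, your local data only yields $f \in \bigcap_{\fp \in \Ass_R(R/I)} (I^nR_\fp \cap R)$, which can be strictly larger than $I^n$, since $R/I^n$ may have embedded primes not lying in $\Ass_R(R/I)$; so the skeleton would already fail for tight closure in characteristic $p$.

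The paper's proof avoids localizing the closure by two devices you are missing. First, for $u \in I^{(hn)}$ the definition of symbolic powers plus prime avoidance produces a single multiplier $c \in R$ outside \emph{every} associated prime of $R/I$ with $cu \in I^{hn}$, and one adjoins an $n$-th (in general $M$-th) root of $u$ in a module-finite complete local extension $R'$; the identity $(cu^{1/n})^n = c^nu \in (I^hR')^n$ puts $cu^{1/n}$ in $\overline{I^hR'}$, and the Brian\c{c}on--Skoda axiom (applied over $R'$, to an $h$-generated reduction obtained locally but fed into an integral-closure statement over $R'$) gives $cu^{1/n} \in (IR')^\cl \subseteq IB$ for a big Cohen--Macaulay algebra $B$ over $R'$ furnished by Proposition \ref{prop:rg41}. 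Second, the multiplier $c$ is removed not by localizing but by a nonzerodivisor argument: $B$ is also big Cohen--Macaulay over $R$, hence faithfully flat since $R$ is regular, so $c$ is a nonzerodivisor on $B/IB$ (resp.\ $B/I^{(s_i+1)}B$), giving $u^{1/n} \in IB$, and taking powers and contracting along the faithfully flat map yields $u \in I^nB \cap R = I^n$. If you want to salvage your write-up, you should replace the localization-and-gluing step by this root-adjunction plus nonzerodivisor descent in a single big Cohen--Macaulay algebra, and record the preliminary reduction to a complete regular local ring with perfect ($F$-finite) residue field needed for Jiang's theorem on $\wepf$.
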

\par Work of Schenzel
\citeleft\citen{Sch85}\citemid Theorem 1\citepunct \citen{Sch86}\citemid Theorem
3.2\citeright\ (see also \cite[\S2.1]{Sch98}) and Swanson \cite[Main Theorem
3.3]{Swa00} yields a complete characterization for when an ideal $I$ in an
arbitrary (not necessarily regular) ring satisfies \eqref{eq:elsbound}
for some integer $h$, but the integer $h$ appearing in their
results depends on the ideal $I$.
Theorem \ref{thm:mainelshhms} implies in particular that
when $R$ is a finite-dimensional regular ring, one can take $k = \dim(R)$ by
\cite[Proposition 5.1.6]{SH06}, and hence there is a uniform $k$ that does not
depend on $I$.
Schenzel's results answered a question of Hartshorne \cite[p.\ 160]{Har70},
special cases of which were studied previously by Zariski \cite[Lemma 3 on p.\
33]{Zar51}.
\par Theorem \ref{thm:mainelshhms} is due to Ein, Lazarsfeld, and Smith
\cite[Theorem 2.2 and Variant on p.\ 251]{ELS01} and
Hochster and Huneke \cite[Theorems 2.6 and 4.4$(a)$]{HH02} in equal
characteristic.
In this latter paper,
Hochster and Huneke asked whether Theorem
\ref{thm:mainelshhms} holds in mixed characteristic \cite[p.\ 368]{HH02}.
Ma and Schwede answered the special case of
Hochster and Huneke's question in the affirmative when 
$R$ is excellent (or, more generally, when $R$ has geometrically
reduced formal fibers) and $I$ is radical \cite[Theorem 7.4]{MS18}.
We recently answered Hochster and Huneke's question in complete generality
\cite[Theorem A]{Mur} using various versions of
perfectoid/big Cohen--Macaulay test
ideals.\medskip
\par The proof of Theorem \ref{thm:mainelshhms} in equal characteristic
due to Hochster and Huneke \cite{HH02} uses the theory of tight closure in
equal characteristic $p > 0$ \cite{HH90} or equal characteristic zero
\cite{HHchar0}.
This proof is remarkably short, especially in equal characteristic $p > 0$
when $I$ is radical \cite[pp.\ 350--351]{HH02}.
\par The goal of this paper is to give a short, closure-theoretic proof of
Theorem \ref{thm:mainelshhms} in mixed characteristic using an appropriate
replacement for tight closure.
In fact, our proof applies in any context where a sufficiently well-behaved
closure operation exists, and hence gives a new proof of Theorem
\ref{thm:mainelshhms} in all characteristics.
As we did in \cite{Mur}, we in fact show a stronger version of Theorem 
\ref{thm:mainelshhms}
involving various products of symbolic
powers on the right-hand side of \eqref{eq:elsbound},
and we also show a related uniform bound for regular local rings
related to a conjecture of Eisenbud and Mazur \cite[p.\ 190]{EM97}.
These results were shown by Johnson \cite[Theorems
4.4 and 4.3(2)]{Joh14} in equal characteristic using tight closure.
\begin{alphthm}\label{thm:elshhmsmb}
  Let $R$ be a regular ring, and let $I \subseteq R$ be an ideal.
  Let $h$ be the largest analytic spread of the ideals $IR_\fp \subseteq R_\fp$,
  where $\fp$ ranges over all associated primes $\fp$ of $R/I$.
  Then, for all $n \ge 1$ and for all non-negative integers $s_1,s_2,\ldots,s_n$
  with $s = \sum_{i=1}^n s_i$, we have
  \[
    I^{(s+nh)} \subseteq \prod_{i=1}^n I^{(s_i+1)}.
  \]
\end{alphthm}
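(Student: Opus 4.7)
The plan is to mimic the equal-characteristic tight closure arguments of Hochster--Huneke and Johnson, substituting tight closure with any closure operation $\cl$ satisfying two properties: (i) triviality on ideals in regular rings, and (ii) a Brian\c{c}on--Skoda-type theorem. In mixed characteristic, Heitmann's $\epf$ and Jiang's $\wepf$ closures provide such a $\cl$, as does any Dietz closure satisfying R.G.'s algebra axiom.

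\textbf{Step 1: Reduce to localization at an associated prime of $R/I$.} Given $f \in I^{(s+nh)}$, I would argue that to show $f \in \prod_{i=1}^n I^{(s_i+1)}$ it suffices to verify the containment after localizing at each $\fp \in \Ass(R/I)$. For such $\fp$, one has $I^{(k)} R_\fp = I^k R_\fp$ directly from the definition of symbolic powers. Reducing membership in the product $\prod I^{(s_i+1)}$ to local checks at $\Ass(R/I)$ requires an auxiliary argument controlling $\Ass(R/\prod I^{(s_i+1)})$ (alternatively, by passing through a $W$-saturation). After also replacing $R_\fp$ by a faithfully flat local extension with infinite residue field, we may assume $R$ is a regular local ring, $IR_\fp$ has analytic spread $\ell \le h$, and a minimal reduction $J\subseteq I$ exists with exactly $h$ generators (enlarging if $\ell < h$).

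\textbf{Step 2: Apply closure-theoretic Brian\c{c}on--Skoda.} Since $I$ is integral over $J$,
\[
  I^{s+nh} R_\fp \;\subseteq\; \overline{J^{s+nh}}.
\]
The closure-theoretic Brian\c{c}on--Skoda inequality---the central tool that R.G.'s algebra axiom together with a Dietz closure (or Heitmann's/Jiang's closures specifically in mixed characteristic) supplies---now gives
\[
  \overline{J^{s+nh}} \;\subseteq\; \bigl(J^{s+(n-1)h+1}\bigr)^{\cl}.
\]
Because $R$ is regular, $\cl$ is trivial on ideals, so the right-hand side equals $J^{s+(n-1)h+1}$. Since $(n-1)h+1 \ge n$ whenever $h, n \ge 1$, this is contained in $J^{s+n} = \prod_{i=1}^n J^{s_i+1} \subseteq \prod_{i=1}^n I^{s_i+1} \subseteq \prod_{i=1}^n I^{(s_i+1)} R_\fp$. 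Chaining these inclusions yields $I^{(s+nh)} R_\fp \subseteq \prod I^{(s_i+1)} R_\fp$, and Step 1 delivers the global containment.

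\textbf{Main obstacle.} The crux is the closure-theoretic Brian\c{c}on--Skoda inequality used in Step 2. In equal characteristic $p>0$ this is the classical result of Hochster--Huneke for tight closure; in mixed characteristic it requires establishing the analogue for $\epf$, $\wepf$, or a Dietz closure satisfying R.G.'s algebra axiom, leveraging a big Cohen--Macaulay algebra. This is where essentially all the new technical content lies. A secondary but genuine issue sits in Step 1, where one must show that checking the inclusion locally at associated primes of $R/I$ actually detects membership in $\prod I^{(s_i+1)}$; the associated primes of the product need not a priori be contained in $\Ass(R/I)$, so the reduction must be justified either by a careful associated-primes argument or by a detour through the $W$-saturation.
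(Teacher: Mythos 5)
There is a genuine gap, and it sits exactly where you flag a ``secondary'' issue: Step 1 is not a secondary issue, it is the whole theorem. Membership in the ideal $K=\prod_{i=1}^n I^{(s_i+1)}$ can only be checked locally at the primes in $\Ass_R(R/K)$ (or at all primes), and these typically include embedded primes that do \emph{not} lie in $\Ass_R(R/I)$; already $\Ass_R(R/I^2)$ for a prime $I$ can contain such embedded primes. At such a prime $\fq$ the symbolic power does not localize to the ordinary power, so verifying the containment only at $\fp\in\Ass_R(R/I)$ proves nothing about $K$. A telltale sign that the reduction cannot work: if it did, your Step 2 would go through with the classical Lipman--Sathaye Brian\c{c}on--Skoda theorem (valid for regular rings in every characteristic) and no closure operation at all, so Theorem \ref{thm:elshhmsmb} would have been an easy consequence of \cite{LS81} --- contradicting the fact that all known proofs (Ein--Lazarsfeld--Smith, Hochster--Huneke, Ma--Schwede, and this paper) need multiplier/test-ideal or closure/big Cohen--Macaulay machinery precisely to get around this local-to-global failure. (A smaller inaccuracy in Step 2: the paper never needs ``$\cl$ is trivial on ideals in regular rings''; what it uses is faithful flatness of $R\to B$ for a big Cohen--Macaulay algebra $B$ over the regular ring $R$.)

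The paper's actual route avoids localization of the closure entirely, and this is the idea missing from your proposal. Given $u\in I^{(M)}$ with $M=s+nh$, one adjoins an $M$-th root and replaces $R$ by the module-finite complete normal local extension $R'$ containing $u^{1/M}$ (Proposition \ref{prop:elshhmsmd}$(\ref{prop:elshhmsmdrprime})$). Working over $R'$, for each $\fp_\ell\in\Ass_R(R/I)$ a reduction $J\subseteq I$ with at most $h$ generators at $\fp_\ell$ plus the Brian\c{c}on--Skoda axiom $(\ref{axiom:brianconskoda})$ give multipliers $c_\ell\notin\fp_\ell$ with $c_\ell u^{(s_i+h)/M}\in (I^{s_i+1}R')^{\cl}$; Proposition \ref{prop:rg41} (R.G.) pushes this into $I^{s_i+1}B$ for a big Cohen--Macaulay algebra $B$ over $R'$, hence over $R$. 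The prime-avoidance trick of Lemma \ref{lem:symbolicdefs} assembles a single $c$ outside every associated prime, and then --- this is the substitute for your localization --- faithful flatness of $R\to B$ ($R$ regular) makes $c$ a nonzerodivisor on $B/I^{(s_i+1)}B$, so the multiplier can be cancelled without inverting anything, giving $u^{(s_i+h)/M}\in I^{(s_i+1)}B$. Multiplying these $n$ inclusions (the exponents sum to $1$) and contracting along the faithfully flat map $R\to B$ yields $u\in\prod_i I^{(s_i+1)}$. To repair your argument you would have to replace Step 1 by this root-adjunction plus nonzerodivisor mechanism (or something equivalent); as written, the proposed proof does not establish the theorem.
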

\begin{alphthm}\label{thm:elshhmsmc}
  Let $(R,\fm)$ be a regular local ring, and let $I \subseteq R$ be an ideal.
  Let $h$ be the largest analytic spread of the ideals $IR_\fp \subseteq R_\fp$,
  where $\fp$ ranges over all associated primes $\fp$ of $R/I$.
  Then, for all $n \ge 1$ and for all non-negative integers $s_1,s_2,\ldots,s_n$
  with $s = \sum_{i=1}^n s_i$, we have
  \[
    I^{(s+nh+1)} \subseteq \fm\cdot\prod_{i=1}^n I^{(s_i+1)}.
  \]
\end{alphthm}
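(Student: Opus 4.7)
The plan is to mirror the proof of Theorem \ref{thm:elshhmsmb}, tracking one extra power of the reduction through the closure-theoretic Briançon--Skoda step in order to produce the additional factor of $\fm$.

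Write $J = \prod_{i=1}^{n} I^{(s_i+1)}$, and fix a Dietz closure $cl$ on $R$-modules satisfying R.G.'s algebra axiom together with a closure-theoretic Briançon--Skoda-type containment (for instance Heitmann's $\epf$, Jiang's $\wepf$, or one of R.G.'s closures). I would begin with the same reductions as in Theorem \ref{thm:elshhmsmb}: localize near each associated prime of $R/I$, invoke the analytic spread hypothesis to find reductions with at most $h$ generators, and combine these across the $n$ factors to produce an $nh$-generated ideal $K$ whose $cl$-closure controls $J$. The argument for Theorem \ref{thm:elshhmsmb} realises any element of $I^{(s+nh)}$ as lying in $(J \cdot K^{nh})^{cl}$, and the Briançon--Skoda-type containment collapses $K^{nh}$ to $R$ inside the closure, yielding $I^{(s+nh)} \subseteq J^{cl} = J$ upon invoking regularity.

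For Theorem \ref{thm:elshhmsmc}, the same machinery applied to an element $f \in I^{(s+nh+1)}$ places $f$ in $(J \cdot K^{nh+1})^{cl}$: the extra symbolic power on the left-hand side contributes exactly one additional factor of $K$. The Briançon--Skoda-type containment now collapses $K^{nh+1}$ to $K$ rather than to $R$, so
\[
  f \in (K \cdot J)^{cl} \subseteq (\fm \cdot J)^{cl},
\]
where we use $K \subseteq I \subseteq \fm$ (we may of course assume $I$ is proper). Regularity of $R$ then kills the closure, giving $f \in \fm \cdot J$ as required.

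The main obstacle is ensuring that the extra factor of $K$ genuinely survives the closure-theoretic Briançon--Skoda step as an ideal-theoretic multiplication, so that the conclusion lies inside $\fm \cdot J$ rather than only inside $J$. This kind of compatibility is built into the module-theoretic axioms of Dietz closures, so no new closure-theoretic input is needed beyond what already powers the proof of Theorem \ref{thm:elshhmsmb}; Theorem \ref{thm:elshhmsmc} then emerges essentially as a local refinement of that argument.
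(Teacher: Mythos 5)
There is a genuine gap: your mechanism for producing the factor of $\fm$ does not survive the actual bookkeeping, and you have no replacement for the new idea the paper needs at exactly this point. When $u \in I^{(s+nh+1)}$, the closure machinery (Proposition \ref{prop:elshhmsmd}) operates on fractional powers $u^{(s_i+h)/M}$ in a module-finite extension $R'$, with $M = s+nh+1$, and the multiplier $c$ witnessing $cu \in I^{M}$ must eventually be cancelled; this cancellation requires $c$ to be a nonzerodivisor modulo the ideal in question, which is guaranteed only for the saturated ideals $I^{(s_i+1)}$ (whose associated primes lie among those of $R/I$), not for $K\cdot J$ or $\fm\cdot J$. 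So if you push one extra power of the reduction through the Brian\c{c}on--Skoda axiom, say $\overline{J^{s_n+h+1}R'} \subseteq (J^{s_n+2}R')^{\cl}$, then after removing the multiplier you are forced back to $u^{(s_n+h+1)/M} \in I^{(s_n+2)}B$, and multiplying the factors only yields $u \in I^{(s_n+2)}\prod_{i<n} I^{(s_i+1)}$ --- that is, Theorem \ref{thm:elshhmsmb} with $s_n$ replaced by $s_n+1$, not the claimed conclusion. Passing from there to $\fm\cdot\prod_i I^{(s_i+1)}$ would require $I^{(t+1)} \subseteq \fm I^{(t)}$, which is precisely the kind of Eisenbud--Mazur-type statement one cannot assume. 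Your assertion that $f \in (J\cdot K^{nh+1})^{\cl}$ also does not reflect how the argument runs (Brian\c{c}on--Skoda is applied factor by factor to $h$-generated local reductions, not to a single $nh$-generated ideal), but the decisive failure is the one above.

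In the paper, keeping the exponents $(s_i+h)/M$ summing to $(s+nh)/M < 1$ yields only $u^{(s+nh)/(s+nh+1)} \in \prod_i I^{(s_i+1)}B$, and the entire difficulty of Theorem \ref{thm:elshhmsmc} is converting the leftover fractional power $u^{1/(s+nh+1)}$ into a factor of $\fm$. This is done by a splitting (socle-type) argument: one takes the largest integer $l$ for which $R \to B$, $1 \mapsto u^{l/(s+nh+1)}$, splits over $R$ (it exists since $R$ regular makes $R \to B$ faithfully flat, hence pure and split), and then applies every $\varphi \in \Hom_R(B,R)$ to the relation $u^{l/(s+nh+1)}u = u^{(l+1)/(s+nh+1)}u^{(s+nh)/(s+nh+1)}$; maximality of $l$ forces $\varphi(u^{(l+1)/(s+nh+1)}) \in \fm$, while the splitting at $l$ lets one recover $u$ itself, giving $u \in \fm\cdot\prod_i I^{(s_i+1)}$. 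Your proposal contains no analogue of this step, and without it the argument only reproves Theorem \ref{thm:elshhmsmb}.
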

In equal characteristic, special cases of Theorem \ref{thm:elshhmsmb} (resp.\
Theorem \ref{thm:elshhmsmc}) are due to Hochster and Huneke \cite[Theorems 2.6
and 4.4$(a)$]{HH02} (resp.\ Hochster and Huneke \cite[Theorems 3.5 and
4.2(1)]{HH07} and Takagi and Yoshida \cite[Theorems 3.1 and 4.1]{TY08}).
The full statements are due to Johnson \cite[Theorems
4.4 and 4.3(2)]{Joh14} in equal characteristic and to us \cite[Theorems B and
C]{Mur} in mixed characteristic.
See \cite[p.\ 2]{Mur} for more discussion.\medskip
\par Our closure-theoretic strategy in this paper gives
shorter proofs of Theorems
\ref{thm:mainelshhms}, \ref{thm:elshhmsmb}, and \ref{thm:elshhmsmc}
in mixed characteristic compared to \cite{Mur}, where we used various versions
of perfectoid/big Cohen--Macaulay test ideals from
\cite{MS18,MS,PRG,MSTWW,Rob,ST,HLS}.
Our proofs in \cite{Mur} utilized
Bhatt's recent results on the Cohen--Macaulayness of absolute
integral closures in mixed characteristic \cite{Bha} and subsequent developments
from \cite{TY,BMPSTWW,HLS}.\medskip
\par Instead of using test ideals, our proof strategy for Theorems
\ref{thm:mainelshhms}, \ref{thm:elshhmsmb}, and \ref{thm:elshhmsmc} in mixed
characteristic is inspired by an unpublished proof of Theorem
\ref{thm:mainelshhms} in equal characteristic $p >0$ due to Hochster and Huneke
\cite{HH02}, which uses plus closure \cite{HH92,Smi94}.
Hochster and Huneke sketch their strategy
in \cite[p.\ 353]{HH02}.
Their strategy uses the following two ingredients:
\begin{enumerate}[label=$(\textup{\Roman*})$]
  \item Plus closure localizes \citeleft\citen{HH92}\citemid Lemma
    6.5$(b)$\citepunct \citen{Smi94}\citemid p.\ 45\citeright.
  \item A Brian\c{c}on--Skoda-type theorem holds for plus closure
    \cite[Theorem 7.1]{HH95}.
\end{enumerate}
\par In mixed characteristic, Heitmann introduced full extended plus ($\epf$)
closure \cite{Hei01} as a possible replacement for tight closure and plus
closure in mixed characteristic.
Following \cite{Hei01,HM21},
for a domain $R$ such that the image of $p$ lies in the Jacobson
radical of $R$, the \textsl{full extended plus} ($\epf$) \textsl{closure} of an
ideal $I \subseteq R$ is
\[
  I^\epf \coloneqq \Set*{x \in R \given \begin{tabular}{@{}c@{}}
    there exists $c \in R - \{0\}$ such that\\
    $c^\varepsilon x \in (I,p^N)R^+$\\
    for every $\varepsilon \in \QQ_{>0}$ and every $N \in \ZZ_{>0}$
  \end{tabular}}.
\]
Here, $R^+$ denotes the \textsl{absolute integral closure} of $R$, i.e., the
integral closure of a domain $R$ in an algebraic closure of
its fraction field \cite[p.\ 283]{Art71}.
\par Heitmann proved a Brian\c{c}on--Skoda-type theorem for
$\epf$ closure \cite[Theorem 4.2]{Hei01}, and
used $\epf$ closure to prove Hochster's direct summand
conjecture in mixed characteristic for rings of dimension three
\cite{Hei02}.
More recently, using
techniques due to Andr\'e \cite{And18a,And18b} and Bhatt \cite{Bha18}
stemming from Scholze's theory of perfectoid spaces \cite{Sch12},
Heitmann and Ma showed that Heitmann's Brian\c{c}on--Skoda-type theorem
for $\epf$ closure implies the Brian\c{c}on--Skoda theorem in mixed
characteristic \cite[Theorem 3.20]{HM21} and gave a closure-theoretic proof of
Hochster's direct summand conjecture in mixed characteristic
\cite[p.\ 135]{HM21}.
The Brian\c{c}on--Skoda theorem was originally proved by Brian\c{c}on and Skoda
\cite{SB74} for $\mathbf{C}\{z_1,z_2,\ldots,z_n\}$ and by Lipman and Sathaye
\cite{LS81} in general.
The direct summand conjecture was originally proved by Hochster \cite{Hoc73}
in equal characteristic and by Heitmann \cite{Hei02}
(in dimension three) and Andr\'e \cite{And18a,And18b} (in general) in mixed
characteristic.
\par Our strategy in this paper utilizes $\epf$ closure in
conjunction with Jiang's weak $\epf$
($\wepf$) closure \cite{Jia21} and R.G.'s results on closure operations that
induce big Cohen--Macaulay algebras \cite{RG18}.
We use $\wepf$ closure and R.G.'s results to work around the fact that
$\epf$ closure does not localize
\cite[p.\ 817]{Hei01}, and hence Hochster and Huneke's unpublished proof of
Theorem \ref{thm:mainelshhms} using plus closure in equal characteristic $p >0$
\cite[p.\ 353]{HH02} cannot readily be adapted to the mixed characteristic case.
Following \cite{Jia21},
for a domain $R$ as in the previous paragraph, the \textsl{weak
$\epf$} ($\wepf$) \textsl{closure} of an ideal $I \subseteq R$ is
\[
  I^\wepf \coloneqq \bigcap_{N=1}^\infty (I,p^N)^\epf.
\]
\par The advantage of using $\wepf$ closure is that it satisfies key
additional properties not known for $\epf$ closure,
namely Dietz's axioms \cite[Axioms
1.1]{Die10} and R.G.'s algebra axiom \cite[Axiom 3.1]{RG18}.
Dietz formulated the axioms in \cite{Die10} to characterize when a closure
operation can be used to construct big Cohen--Macaulay modules.
R.G. formulated the algebra axiom in \cite{RG18} to characterize when a closure
operation can be used to construct big Cohen--Macaulay algebras.
Using perfectoid techniques from \cite{And18a,And18b,Bha18,HM21},
Jiang \cite[Theorem
4.8]{Jia21} showed that $\wepf$ closure
satisfies Dietz's axioms and R.G.'s algebra axiom when
$R$ is a complete local domain of mixed characteristic with $F$-finite residue
field.
As a consequence,
we have the inclusions
\begin{equation}\label{eq:epfwepfbcm}
  I^\epf \subseteq I^\wepf \subseteq IB \cap R
\end{equation}
for some big Cohen--Macaulay algebra $B$ over $R$ by a result of R.G.
\cite[Proposition 4.1]{RG18}.
We pass to $IB \cap R$ instead of working with
$I^\epf$ or $I^\wepf$ because
there are no elements $c^\varepsilon$ or $p$-powers involved in the definition,
and hence we can detect when
nonzerodivisors in $R/I$ map to nonzerodivisors in $B/IB$.
Detecting when elements are nonzerodivisors is necessary to avoid localizations
when proving Theorems \ref{thm:mainelshhms}, \ref{thm:elshhmsmb}, and
\ref{thm:elshhmsmc}.\medskip
\par To illustrate our proof strategy for Theorems
\ref{thm:mainelshhms}, \ref{thm:elshhmsmb}, and \ref{thm:elshhmsmc}, we sketch
the proof of Theorem \ref{thm:mainelshhms} when $R$ is a complete regular 
local ring of
mixed characteristic with an $F$-finite residue field and $I$ is generated by
$h$ elements.
If $u \in I^{(hn)}$, there exists an element $c$ avoiding the associated primes
of $R/I$ such that $cu \in I^{hn}$.
We then consider a module-finite extension $R'$ of $R$ that is complete local
and contains an $n$-th root $u^{1/n}$ of $u$.
We have
\begin{align*}
  (cu^{1/n})^n &= c^nu \in (I^hR')^n,
\intertext{and hence $cu^{1/n} \in \overline{I^hR'}$.
By Heitmann's Brian\c{c}on--Skoda-type theorem for $\epf$ closure
\cite[Theorem 4.2]{Hei01} and using \eqref{eq:epfwepfbcm}, we know that}
  cu^{1/n} \in (IR')^\epf &\subseteq (IR')^\wepf \subseteq IB \cap R',
\end{align*}
where $B$ is a big Cohen--Macaulay algebra over $R'$.
Then, $B$ is also a big Cohen--Macaulay algebra over $R$ and
$R \to B$ is faithfully flat.
We therefore see that $c$
is a nonzerodivisor on $B/IB$, and hence $u^{1/n} \in IB \cap R'$.
Taking $n$-th powers, we have $u \in I^nB \cap R = I^n$.\medskip
\par Finally, our proof strategy also applies to any
closure operation satisfying Dietz's axioms, R.G.'s algebra axiom, and a
Brian\c{c}on--Skoda-type theorem.
We therefore obtain new proofs of Theorems \ref{thm:mainelshhms},
\ref{thm:elshhmsmb}, and \ref{thm:elshhmsmc} in all characteristics, since
tight closure \cite{HH90} and plus closure \cite{HH92,Smi94} satisfy these
properties in equal characteristic $p > 0$, and
$\mathfrak{B}$-closure \cite{AS07} satisfies these properties in equal
characteristic zero.
In mixed characteristic, one can alternatively use Heitmann's full rank $1$
($\ronef$) closure \cite{Hei01} instead of $\wepf$ closure
(still using results from \cite{RG18,Jia21}).
See Table \ref{tab:dietzclosures} for references to proofs of these axioms for
these closure operations.
We can also adapt our proofs to use small or big equational tight clossure in
equal characteristic zero \cite{HHchar0}.
See Remark \ref{rem:weakalgaxiom}.
\subsection*{Outline}
We review some preliminaries in
\S\ref{sect:prelims}.
We show that the definition of symbolic
powers in \eqref{eq:hhsymbdef} matches the definition used in \cite{Mur}
(Lemma \ref{lem:symbolicdefs}) and
define full extended plus ($\epf$) closure \cite{Hei01} and weak $\epf$
($\wepf$) closure \cite{Jia21}.
We state Dietz's axioms for closure operations \cite{Die10} and R.G.'s
algebra axiom \cite{RG18}, and collect references for proofs of these axioms and
for Brian\c{c}on--Skoda-type theorems for various closure operations in Table
\ref{tab:dietzclosures}.
We also state a result of R.G. from \cite{RG18} stating that Dietz
closures satisfying R.G.'s algebra axiom are contained in big Cohen--Macaulay
algebra closures (Proposition \ref{prop:rg41}).
Finally, we prove Theorems \ref{thm:mainelshhms}, \ref{thm:elshhmsmb}, and
\ref{thm:elshhmsmc} in \S\ref{sect:proofs}.
\subsection*{Conventions}
All rings are commutative with identity, and all ring maps are unital.
\par If $R$ is a domain, the
\textsl{absolute integral closure} $R^+$ of $R$ is the integral closure of
$R$ in an algebraic closure of the fraction field of $R$ \cite[p.\ 283]{Art71}.
\par If $(R,\fm)$ is a Noetherian local ring, then an $R$-algebra $B$
is a \textsl{(balanced) big Cohen--Macaulay algebra} over $R$ if every system of
parameters for $R$ is a regular sequence on $B$
\citeleft\citen{Hoc75queens}\citemid pp.\ 110--111\citepunct
\citen{Sha81}\citemid Definition 1.4\citeright.

\subsection*{Acknowledgments}
We are grateful to
Hailong Dao,
Elo\'{i}sa Grifo,
Melvin Hochster, 
Zhan Jiang,
Rebecca R.G.,
Farrah Yhee, and
Wenliang Zhang
for helpful conversations.

\section{Preliminaries}\label{sect:prelims}
\subsection{Symbolic powers}
For completeness,
we show that the definition of symbolic powers in \eqref{eq:hhsymbdef}
matches the definition used in \cite{Mur}.
This latter definition is the one used in the survey \cite[p.\
388]{DDSGHNB18}.
A similar argument will appear in the proof of Proposition
\ref{prop:elshhmsmd}$(\ref{prop:elshhmsmdbcm})$.
\begin{lemma}\label{lem:symbolicdefs}
  Let $R$ be a Noetherian ring, and let $I \subseteq R$ be an ideal.
  Denote by $R_W$ the localization of $R$ with respect to the multiplicative set
  $W = \bigcup_{\fp \in \Ass_R(R/I)} \fp$.
  Then, we have
  \begin{equation}\label{eq:diffdefs}
    I^nR_W \cap R = \bigcap_{\fp \in \Ass_R(R/I)} I^nR_\fp \cap R.
  \end{equation}
\end{lemma}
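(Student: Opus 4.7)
The plan is to prove the two inclusions in \eqref{eq:diffdefs} separately, directly from the definitions of localization. The containment $\subseteq$ is essentially formal, while $\supseteq$ will use prime avoidance together with the Noetherian hypothesis.

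For $\subseteq$, I would observe that every $w \in W$ lies outside each $\fp \in \Ass_R(R/I)$, so any denominator from $W$ is also a valid denominator for each $R_\fp$. Concretely, if $x \in I^n R_W \cap R$, there exists $w \in W$ with $wx \in I^n$, and since $w \notin \fp$, this gives $x \in I^nR_\fp$ for every $\fp \in \Ass_R(R/I)$. Intersecting over all associated primes yields the inclusion.

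For $\supseteq$, I would fix $x$ in the right-hand side and consider the colon ideal
\[
  J \coloneqq (I^n :_R x) = \Set{r \in R \given rx \in I^n}.
\]
By hypothesis, for each $\fp \in \Ass_R(R/I)$ there exists $s_\fp \in R \setminus \fp$ with $s_\fp x \in I^n$, so $s_\fp \in J \setminus \fp$ and hence $J \not\subseteq \fp$. Since $R$ is Noetherian, $\Ass_R(R/I)$ is finite, so prime avoidance applied to the finite collection $\{\fp \in \Ass_R(R/I)\}$ produces an element $w \in J$ with $w \notin \bigcup_{\fp \in \Ass_R(R/I)} \fp$, that is, $w \in W$. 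Then $wx \in I^n$ exhibits $x \in I^n R_W \cap R$, completing the inclusion.

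The argument is routine and has no real obstacle; the only point worth flagging is the implicit use of finiteness of $\Ass_R(R/I)$ (which follows from $R$ being Noetherian), since prime avoidance requires the union on the right to be over finitely many primes.
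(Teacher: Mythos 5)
Your proof is correct, and its overall shape matches the paper's: both directions are handled separately, the inclusion ``$\subseteq$'' is the formal one (you clear denominators; the paper just notes that $R \to R_\fp$ factors through $R_W$), and for ``$\supseteq$'' both arguments come down to producing a \emph{single} element of $W = R - \bigcup_{\fp \in \Ass_R(R/I)} \fp$ multiplying $x$ into $I^n$. The difference is in how that element is produced. You apply prime avoidance directly to the colon ideal $J = (I^n :_R x)$: since $J \not\subseteq \fp$ for each of the finitely many associated primes, $J \not\subseteq \bigcup_\fp \fp$, so $J$ meets $W$. The paper instead constructs the multiplier explicitly: it restricts to the associated primes $\fp_\ell$ that are maximal in $\Ass_R(R/I)$ (so that elements $d_\ell \in \fp_\ell - \bigcup_{j \ne \ell}\fp_j$ exist by prime avoidance) and then takes $c = \sum_\ell c_\ell \prod_{j \ne \ell} d_j$, verifying by hand that $c \notin \bigcup_\ell \fp_\ell$ and $cu \in I^n$ --- in effect inlining the proof of the avoidance statement you quote, applied to $J$. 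Your packaging is shorter and avoids singling out the maximal associated primes; the paper's explicit construction has the advantage that it is reused verbatim later, in the proof of Proposition \ref{prop:elshhmsmd}$(\ref{prop:elshhmsmdbcm})$, where the same element $c$ is needed in a setting where one wants it in hand rather than just its existence. You are also right to flag finiteness of $\Ass_R(R/I)$ (from Noetherianity) as the point that makes prime avoidance applicable; the paper uses it implicitly in the same way.
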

\begin{proof}
  The natural maps $R \to R_\fp$ factor through $R_W$ by the universal property
  of localization.
  We therefore have inclusions $I^nR_W \cap R \subseteq I^nR_\fp \cap R$ for
  every $\fp \in \Ass_R(R/I)$, which yields the inclusion ``$\subseteq$'' in
  \eqref{eq:diffdefs}.
  Thus, it suffices to show the inclusion ``$\supseteq$'' in
  \eqref{eq:diffdefs}.
  \par Let $u \in \bigcap_{\fp \in \Ass_R(R/I)} I^nR_\fp \cap R$.
  Denote by $\{\fp_\ell\}$ the subset of $\Ass_R(R/I)$ consisting of the
  associated primes that are maximal in $\Ass_R(R/I)$ with respect to inclusion.
  For each $\ell$, there exists an element $c_\ell \notin
  \fp_\ell$ such that $c_\ell u \in I^n$.
  For each $\ell$, we can also choose $d_\ell \in \fp_\ell - \bigcup_{j \ne
  \ell} \fp_j$ by prime avoidance.
  Now consider the element
  \begin{align*}
    c &= \sum_\ell \biggl( c_\ell \prod_{j \ne \ell} d_j \biggr).
  \intertext{Note that $c \notin \bigcup_\ell \fp_\ell$, for
  otherwise if $c \in \fp_{\ell_0}$ for some $\ell_0$, we have}
    c_{\ell_0} \prod_{j \ne \ell_0} d_j &=
    c - \sum_{\ell \ne \ell_0} \biggl( c_\ell \prod_{j \ne \ell} d_j \biggr) \in
    \fp_{\ell_0},
    \intertext{which contradicts the fact that $c_{\ell_0} \notin \fp_{\ell_0}$
    and $d_j \notin \fp_{\ell_0}$ for every $j \ne \ell_0$.
    Since $c_\ell u \in I^n$ by assumption, we see that}
    cu &= \sum_\ell \biggl( c_\ell u \prod_{j \ne \ell} d_j \biggr) \in I^n.
  \end{align*}
  Finally, since $c \in W = R - \bigcup_\ell \fp_\ell$,
  we have $u \in I^nR_W \cap R$.
\end{proof}

\subsection{Closure operations in mixed characteristic}
We recall the definition of Heitmann's full extended plus ($\epf$) closure.
\begin{citeddef}[{\citeleft\citen{Hei01}\citemid Definition on pp.\
  804--805\citepunct \citen{RG16}\citemid Definition 7.1\citepunct
  \citen{HM21}\citemid Definition 2.3\citepunct
  \citen{Jia21}\citemid Definition 2.3\citeright}]
  \label{def:epf}
  Let $p > 0$ be a prime number.
  Let $R$ be a domain such that the image of $p$ lies in the Jacobson
  radical of $R$.
  For an inclusion $Q \subseteq M$ of finitely generated $R$-modules,
  the \textsl{full extended plus} ($\epf$) \textsl{closure} of $Q$ in
  $M$ is
  \[
    Q^\epf_M \coloneqq \Set*{u \in M \given \begin{tabular}{@{}c@{}}
      there exists $c \in R - \{0\}$ such that\\
      $c^\varepsilon \otimes u \in \im(R^+ \otimes_R Q \to R^+ \otimes_R M) +
      p^N(R^+ \otimes_R M)$\\
      for every $\varepsilon \in \QQ_{>0}$ and every $N \in \ZZ_{>0}$
    \end{tabular}}.
  \]
\end{citeddef}
We also recall the definition of Jiang's weak $\epf$ ($\wepf$) closure.
\begin{citeddef}[{\cite[Definition 4.1]{Jia21}}]\label{def:wepf}
  Let $p > 0$ be a prime number.
  Let $R$ be a domain such that the image of $p$ lies in the Jacobson
  radical of $R$.
  For an inclusion $Q \subseteq M$ of $R$-modules,
  the \textsl{weak $\epf$} ($\wepf$) \textsl{closure} of $Q$ in $M$ is
  \[
    Q^\wepf_M \coloneqq \bigcap_{N = 1}^\infty (Q + p^NM)^\epf_M.
  \]
  By definition, we have the inclusion $Q^\epf_M \subseteq Q^\wepf_M$
  (see \cite[Remark 4.2]{Jia21}).
\end{citeddef}

\subsection{Axioms for closure operations}
In this subsection, we fix the following notation.
\begin{notation}\label{notation:closure}
  We denote by $R$ a ring, and by
  $Q$, $M$, and $W$ arbitrary finitely generated
  $R$-modules such that $Q \subseteq M$.
  We consider an operation $\cl$ sending submodules $Q \subseteq M$ to an
  $R$-module $Q_M^\cl$.
  We denote $I^\cl \coloneqq I^\cl_R$ for ideals $I \subseteq R$.
\end{notation}
We state Dietz's axioms for closure operations from \cite{Die10} with
some conventions from \citeleft\citen{Eps12}\citemid Definition 2.1.1\citepunct
\citen{Die18}\citemid Definition 1.1\citeright.
\begin{citedaxioms}[{\citeleft\citen{Die10}\citemid Axioms 1.1\citeright}]
  \label{axioms:dietzrg}
  Fix notation as in Notation \ref{notation:closure}.
  We say that the operation $\cl$ is a \textsl{closure operation}
  if it satisfies the following three axioms:
  \begin{enumerate}[label=$(\arabic*)$,ref=\arabic*]
    \item\label{axioms:dietz1}
      (Extension) $Q_M^\cl$ is a submodule of $M$ containing $Q$.
    \item (Idempotence) $(Q_M^\cl)^\cl = Q_M^\cl$.
    \item\label{axioms:dietz3}
      (Order-preservation) If $Q \subseteq M \subseteq W$, then $Q_W^\cl
      \subseteq M_W^\cl$.
  \end{enumerate}
  We also consider the following axioms:
  \begin{enumerate}[resume,label=$(\arabic*)$,ref=\arabic*]
    \item (Functoriality) Let $f\colon M \to W$ be a map of $R$-modules.
      Then, $f(Q_M^\cl) \subseteq f(Q)^\cl_W$.
    \item\label{axioms:dietz5}
      (Semi-residuality) If $Q^\cl_M = Q$, then $0^\cl_{M/Q} = 0$.
  \end{enumerate}
  Now suppose that $R$ is a Noetherian local domain $(R,\fm)$.
  We say that $\cl$ is a \textsl{Dietz closure} if it satisfies axioms
  $(\ref{axioms:dietz1})$--$(\ref{axioms:dietz5})$ above, and the following two
  additional axioms:
  \begin{enumerate}[resume,label=$(\arabic*)$,ref=\arabic*]
    \item The maximal ideal $\fm$ and the zero ideal $0$ are $\cl$-closed in
      $R$, i.e., $\fm^\cl = \fm$ and $0^\cl = 0$.
    \item (Generalized colon-capturing) Let $x_1,x_2,\ldots,x_{k+1}$ be a
      partial system of parameters for $R$ and let $J = (x_1,x_2,\ldots,x_k)$.
      Suppose there exists a surjective map $f\colon M \to R/J$ of $R$-modules,
      and let $v \in M$ be an arbitrary element
      such that $f(v) = x_{k+1} + J$.
      Then,
      \[
        (Rv)^\cl_M \cap \ker(f) \subseteq (Jv)^\cl_M.
      \]
  \end{enumerate}
\end{citedaxioms}
To state R.G.'s algebra axiom, we first define the notion of a $\cl$-phantom
extension.
\begin{citeddef}[{\cite[Definition 2.2]{Die10}}]\label{def:clphantom}
  Fix notation as in Notation \ref{notation:closure}.
  Suppose $\cl$ satisfies axioms
  $(\ref{axioms:dietz1})$--$(\ref{axioms:dietz5})$ above.
  Let $M$ be a finitely generated $R$-module and let $\alpha\colon R \to M$ be
  an injective map of $R$-modules.
  Consider the short exact sequence
  \[
    0 \longrightarrow R \overset{\alpha}{\longrightarrow} M \longrightarrow Q
    \longrightarrow 0,
  \]
  and let $\epsilon \in \Ext^1_R(Q,R)$ be the element corresponding to this
  short exact sequence via the Yoneda correspondence.
  \par Fix a projective resolution $P_\bullet$ of $Q$ consisting of finitely
  generated projective $R$-modules $P_i$.
  We say that $\epsilon$ is \textsl{$\cl$-phantom} if
  \[
    \epsilon \in \bigl(\im\bigl(\Hom_R(P_0,R) \longrightarrow
    \Hom_R(P_1,R)\bigr)\bigr)^\cl_{\Hom_R(P_1,R)}.
  \]
  This definition does not depend on the choice of
  $P_\bullet$ by \cite[Discussion 2.3]{Die10}.
  \par We say that $\alpha$ is a \textsl{$\cl$-phantom extension} if $\epsilon$
  is $\cl$-phantom.
\end{citeddef}
We now state R.G.'s algebra axiom.
\begin{citedaxiom}[{\citeleft\citen{RG18}\citemid Axiom 3.1\citeright}]
  Fix notation as in Notation \ref{notation:closure}.
  If $\cl$ satisfies axioms $(\ref{axioms:dietz1})$--$(\ref{axioms:dietz5})$
  above, we consider the following axiom:
  \begin{enumerate}[start=8,label=$(\arabic*)$,ref=\arabic*]
    \item\label{axiom:rgalg}
      (Algebra axiom) Let $\alpha \colon R \to M$ be a map of $R$-modules. 
      If $\alpha$ is an $\cl$-phantom extension, then the map $\alpha'\colon R
      \to \Sym^2_R(M)$ where $1 \mapsto \alpha(1) \otimes \alpha(1)$
      is a $\cl$-phantom extension.
  \end{enumerate}
\end{citedaxiom}
\begin{table}[t]
  \begin{ThreePartTable}
    \begin{TableNotes}
      \item[$\star$]\label{tn:char0weakalg}
        As far as we are aware, it is open whether R.G.'s algebra axiom holds
        for these closure operations.
        However, a suitable replacement that works for our proofs does hold. See
        Remark \ref{rem:weakalgaxiom}.
      \item[$F$-finite]\label{tn:ffinite} Jiang's results hold when $R/\fm$
        is $F$-finite.
    \end{TableNotes}
    {\scriptsize
    \begin{longtable}[c]{ccccc}
      \toprule
      characteristic & closure ($\cl$)
      & Dietz closure & R.G.'s algebra axiom & Brian\c{c}on--Skoda\\
      \cmidrule(lr){1-2} \cmidrule(lr){3-5}
      \multirow{2}{*}{char.\ $p > 0$}
      & tight ($*$)
      & \cite[Ex.\ 5.4]{Die10} & \cite[Prop.\ 3.6]{RG18}
      & \cite[Thm.\ 8.1]{HH94}\\
      & plus ($+$)
      & \cite[Ex.\ 5.1]{Die10} & \cite[Prop.\ 3.11]{RG18}
      & \cite[Thm.\ 7.1]{HH95}\\
      \midrule
      \multirow{3}{*}{equal char.\ $0$} & small equational tight
      (${*\mathsf{eq}}$)
      & \cite[Ex.\ 5.7]{Die10}
      & \textbf{open}\tnotex{tn:char0weakalg}
      & \cite[Thm.\ 4.1.5]{HHchar0}\\
      & big equational tight (${*\mathsf{EQ}}$)
      & \cite[Ex.\ 5.7]{Die10}
      & \textbf{open}\tnotex{tn:char0weakalg}
      & \cite[Thm.\ 4.1.5]{HHchar0}\\
      & $\mathfrak{B}$- ($+$) & \cite[Thm.\ 4.2]{Die10} & \cite[Prop.\
      3.11]{RG18} & \cite[Thm.\ 7.14.3]{AS07}\\
      \midrule
      \multirow{3}{*}{mixed char.}
      & full extended plus ($\epf$) & \textbf{open}
      & \textbf{open}\tnotex{tn:char0weakalg} & \cite[Thm.\ 4.2]{Hei01}\\
      & weak $\epf$ ($\wepf$) &
      \multicolumn{2}{c}{\cite[Thm.\ 4.8]{Jia21}\tnotex{tn:ffinite}}
      & \cite[Thm.\ 4.2]{Hei01}\\
      & full rank 1 ($\ronef$) & \multicolumn{2}{c}{\cite[Thm.\ 4.8 and Rem.\
      4.7]{Jia21}\tnotex{tn:ffinite}} & \cite[Thm.\ 4.2]{Hei01}\\
      \bottomrule
      \insertTableNotes\\
      \caption{Some closure operations on Noetherian complete local domains.}
      \label{tab:dietzclosures}
    \end{longtable}}
  \end{ThreePartTable}
\end{table}
\par We also introduce an axiom asserting that a
closure-theoretic version of the Brian\c{c}on--Skoda theorem holds.
\begin{axiom}
  Fix notation as in Notation \ref{notation:closure}.
  If $\cl$ satisfies axiom $(\ref{axioms:dietz1})$ above, we consider the
  following axiom:
  \begin{enumerate}[start=9,label=$(\arabic*)$,ref=\arabic*]
    \item\label{axiom:brianconskoda}
      (Brian\c{c}on--Skoda-type theorem)
      Let $I \subseteq R$ be an ideal generated by at most $h$ elements.
      Then, $\overline{I^{h+k}} \subseteq (I^{k+1})^\cl$ for every integer $k
      \ge 0$.
  \end{enumerate}
\end{axiom}
\subsection{Algebra closures}
The key result we need about Dietz closures satisfying R.G.'s algebra axiom
$(\ref{axiom:rgalg})$ is
that they are related to algebra closures, which are defined as follows.
\begin{citeddef}[{\cite[Definition 2.3 and Remark 2.4]{RG16}}]
  Let $R$ be a ring, and let $S$ be an $R$-algebra.
  Let $Q \subseteq M$ be an inclusion of finitely generated $R$-modules.
  We then set
  \[
    Q_M^{\cl_S} \coloneqq 
    \Set[\big]{u \in M \given 1 \otimes u \in \im(S \otimes_R Q
    \to S \otimes_R M)}.
  \]
  We call the operation $\cl_S$ an \textsl{algebra closure}.
\end{citeddef}
\begin{examples}
  If $B$ is a big Cohen--Macaulay algebra over a Noetherian local domain, then
  the algebra closure $\cl_B$ is a Dietz closure \cite[Theorem 4.2]{Die10} and
  satisfies R.G.'s algebra axiom $(\ref{axiom:rgalg})$
  \cite[Proposition 3.11]{RG18}.
  We list two examples of such algebra closures appearing in
  Table \ref{tab:dietzclosures} for which a
  Brian\c{c}on--Skoda-type theorem $(\ref{axiom:brianconskoda})$ holds.
  \begin{enumerate}[label=$(\roman*)$,ref=\roman*]
    \item Let $R$ be an excellent biequidimensional local domain of equal
      characteristic $p > 0$, or a Noetherian local domain of equal
      characteristic $p > 0$ that is a homomorphic image of a Gorenstein local
      ring.
      Then, $R^+$ is a big Cohen--Macaulay algebra over $R$ by
      \citeleft\citen{HH92}\citemid Main Theorem 5.15\citepunct
      \citen{HL07}\citemid Corollary 2.3$(b)$\citeright.
      The associated algebra closure $\cl_{R^+}$ is called \textsl{plus
      closure} \cite[Definition 2.13]{Smi94}, and
      is denoted by $+$.
    \item Let $R$ be a Noetherian local ring of equal characteristic zero.
      Using ultraproducts, Aschenbrenner and Schoutens construct a big
      Cohen--Macaulay algebra $\mathfrak{B}(R)$ over $R$ \cite[(7.7)]{AS07}.
      The associated algebra closure $\cl_{\mathfrak{B}(R)}$ is called
      \textsl{$\mathfrak{B}$-closure} \cite[(7.13)]{AS07}, and is also denoted
      by $+$.
  \end{enumerate}
\end{examples}
We now state the following result due to R.G., which says that Dietz
closures satisfying R.G.'s algebra axiom $(\ref{axiom:rgalg})$
are contained in an algebra
closure defined by a big Cohen--Macaulay algebra.
\begin{citedprop}[{\cite[Proposition 4.1]{RG18}}]\label{prop:rg41}
  Let $R$ be a Noetherian local domain and let $\cl$ be a Dietz closure on $R$
  that satisfies R.G.'s algebra axiom $(\ref{axiom:rgalg})$.
  Then, there exists a big Cohen--Macaulay algebra $B$ over $R$ such that
  $Q^\cl_M \subseteq Q^{\cl_B}_M$
  for every inclusion $Q \subseteq M$ of finitely generated $R$-modules.
\end{citedprop}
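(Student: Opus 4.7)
The plan is to construct $B$ as a direct limit of iterated algebra modifications of $R$, following Hochster's classical strategy for building big Cohen--Macaulay algebras together with its closure-theoretic adaptation by Dietz \cite{Die10}. First I would fix a well-ordered index set encoding two kinds of tasks: (a) every partial parameter relation $x_{k+1}u \in (x_1,\ldots,x_k)S$ that arises in every intermediate $R$-algebra $S$ appearing in the construction, and (b) for every finitely generated inclusion $Q \subseteq M$ of $R$-modules and every $u \in Q^\cl_M$, a witness requiring that $1 \otimes u$ land in the image of $B \otimes_R Q \to B \otimes_R M$. Processing the tasks in order produces a transfinite sequence $R = S_0 \to S_1 \to S_2 \to \cdots$ of algebra modifications, and I would set $B \coloneqq \varinjlim_\lambda S_\lambda$. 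By construction, every system of parameters of $R$ becomes a regular sequence in $B$, and every $u \in Q^\cl_M$ lies in $Q^{\cl_B}_M$.

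The heart of the argument is to verify that $B$ is nontrivial, i.e., that $1 \ne 0$, which reduces to showing that each algebra modification $S_\lambda \to S_{\lambda+1}$ is injective. For task (a), I would form the module modification $N \coloneqq (S_\lambda \oplus S_\lambda^k)/S_\lambda(-u,x_1,\ldots,x_k)$; the corresponding extension class is $\cl$-phantom in the sense of Definition \ref{def:clphantom}, by combining Dietz's generalized colon-capturing axiom with axioms $(\ref{axioms:dietz1})$--$(\ref{axioms:dietz5})$. The associated algebra modification is built from the symmetric algebra $\Sym^\bullet_{S_\lambda}(N)$, and here is where R.G.'s algebra axiom $(\ref{axiom:rgalg})$ is essential: it propagates the $\cl$-phantom property from $S_\lambda \to N$ to $S_\lambda \to \Sym^2_{S_\lambda}(N)$, and iterating yields $\cl$-phantomness of each $S_\lambda \to \Sym^n_{S_\lambda}(N)$. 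Since $\cl$-phantom extensions are injective by definition, each $S_\lambda \to S_{\lambda+1}$ is injective, so $1 \ne 0$ persists to the direct limit. Task (b) is handled analogously: given $u \in Q^\cl_M$, the tautological module modification forcing $u$ into $Q$ is a $\cl$-phantom extension precisely because $u \in Q^\cl_M$, and the algebra axiom upgrades it to an injective algebra modification.

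The main obstacle, I expect, is the bookkeeping required to organize the transfinite construction so that every parameter relation arising at every stage $S_\lambda$ and every closure witness $(Q,M,u)$ is eventually processed, despite the fact that new relations and new closure witnesses appear after each modification. This requires a cofinality argument in the style of \cite{Die10}, but is ultimately a routine set-theoretic exercise; the genuinely new content is packaged into R.G.'s algebra axiom, which is precisely the statement that passing to symmetric powers preserves $\cl$-phantom extensions and thereby allows Dietz's module modifications to be upgraded to algebra modifications without collapsing to zero.
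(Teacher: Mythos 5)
First, a bookkeeping point: the paper does not prove Proposition \ref{prop:rg41} at all --- it is quoted verbatim from \cite[Proposition 4.1]{RG18} --- so the comparison is with R.G.'s original argument. Your outline does track the general shape of that argument: a transfinite sequence of Hochster-style algebra modifications, with closure membership $u \in Q^\cl_M$ converted into a phantom ``forcing'' extension as in \cite{Die10,RG16}, and with the algebra axiom $(\ref{axiom:rgalg})$ used to propagate $\cl$-phantomness through symmetric powers so that module modifications can be upgraded to algebra modifications.

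The genuine gap is in your non-degeneracy step. ``Nontrivial'' for a balanced big Cohen--Macaulay algebra means $1 \notin (x_1,\ldots,x_d)B$ for a system of parameters (equivalently $\fm B \ne B$), not $1 \ne 0$; injectivity of the transition maps $S_\lambda \to S_{\lambda+1}$ --- which the actual proof neither establishes nor needs --- would not rule out $1 \in (x_1,\ldots,x_d)B$ in the limit, and ruling this out is the entire difficulty of the construction. Moreover, the phantomness you invoke cannot be applied at the level of $S_\lambda$: by Notation \ref{notation:closure}, $\cl$ is an operation on finitely generated $R$-modules, so the assertion that $S_\lambda \to \Sym^2_{S_\lambda}(N)$ is $\cl$-phantom is not even meaningful, and ``$\cl$-phantom extensions are injective by definition'' is a hypothesis built into Definition \ref{def:clphantom}, not a conclusion available to prove a modification map injective. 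What Dietz and R.G. actually do is track $\cl$-phantomness of maps $\alpha\colon R \to W$ from $R$ into finitely generated partial (algebra) modifications, show this is preserved at each stage (this is exactly where axiom $(\ref{axiom:rgalg})$, applied over $R$, enters), and then use the Dietz axioms --- in particular $\fm^\cl = \fm$, $0^\cl = 0$, and generalized colon-capturing --- to show that a $\cl$-phantom extension cannot have $\alpha(1) \in (x_1,\ldots,x_d)W$. That mechanism, not injectivity, is what prevents the direct limit from degenerating, and it is missing from your proposal; the same preservation statement is also what legitimizes your step (b), so it cannot be waved through as routine bookkeeping.
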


\section{Proofs of Theorems
\texorpdfstring{\ref{thm:mainelshhms}}{\ref*{thm:mainelshhms}},
\texorpdfstring{\ref{thm:elshhmsmb}}{\ref*{thm:elshhmsmb}}, and
\texorpdfstring{\ref{thm:elshhmsmc}}{\ref*{thm:elshhmsmc}}}\label{sect:proofs}
Theorem \ref{thm:mainelshhms} follows from
Theorem \ref{thm:elshhmsmb} by setting $s_i = 0$ for all $i$.
It therefore suffices to show Theorems \ref{thm:elshhmsmb} and
\ref{thm:elshhmsmc}.\medskip
\par We start with a version of \cite[Lemma 4.1]{Mur} that allows us to reduce
to the local case where the residue field is perfect, and hence $F$-finite if it
is of characteristic $p > 0$.
\begin{lemma}\label{lem:41ffin}
  It suffices to show Theorems \ref{thm:elshhmsmb} and \ref{thm:elshhmsmc}
  under the additional assumptions that $R$ is a complete local ring
  with a perfect residue field, and that
  the localizations of $R$ at the associated primes of $R/I$ have infinite
  residue fields.
\end{lemma}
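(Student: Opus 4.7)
The plan is to carry out a short sequence of faithfully flat base changes $R \to R'$, each preserving regularity, the associated primes of $R/I$, and the analytic-spread bound $h$, so that both the hypotheses and the conclusions of Theorems \ref{thm:elshhmsmb} and \ref{thm:elshhmsmc} transfer between $R$ and $R'$. This parallels the strategy of \cite[Lemma 4.1]{Mur}, with one extra step to upgrade $F$-finiteness of the residue field to perfectness.

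First I would reduce to the local case by localizing $R$ at a maximal ideal containing an associated prime of $R/I$ that witnesses a failure of the containment. Localization preserves regularity, can only shrink $\Ass_R(R/I)$, and preserves analytic spreads of the localizations at associated primes; Lemma \ref{lem:symbolicdefs} together with faithful flatness of localization ensures that the desired containment of symbolic powers can be tested locally.

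Second, I would replace the local ring so obtained by the completion of its Nagata extension $R[T]_{\fm R[T]}$. This is faithfully flat with geometrically regular fibers, so regularity, the set of associated primes of $R/I$ (up to contraction), and the analytic-spread bound are all preserved, while each residue field becomes a purely transcendental extension of its predecessor and is therefore infinite. Finally, I would perfect the residue field at the maximal ideal by a gonflement of the coefficient ring of the resulting complete regular local ring (adjoining all $p^n$-th roots of a $p$-basis to a Cohen or Witt-vector subring). This yields a complete regular local $R'$ with perfect residue field via a faithfully flat extension with geometrically regular fibers, and since this last extension only enlarges residue fields, the residue fields at the primes over $\Ass_R(R/I)$ remain infinite.

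The main obstacle is tracking the associated primes and analytic spreads through each base change simultaneously with the containment of ideals. This relies on the classical fact that a faithfully flat map with geometrically regular fibers preserves associated primes of quotients and preserves analytic spread at the relevant primes (as in \cite[Proposition 5.1.6]{SH06}), together with faithful flatness to pull the conclusion back to $R$. The one genuine subtlety is the order of the last two steps: the residue-field perfection must be performed \emph{after} the Nagata-type extension, since in positive and mixed characteristic a polynomial-style extension would otherwise destroy perfectness of the residue field at the closed point.
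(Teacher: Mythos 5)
Your overall route is the same as the paper's: reduce to the local case, force infinite residue fields at the associated primes by a Nagata extension, perfect the residue field by a gonflement, complete, and descend along faithfully flat maps. The paper does exactly this, quoting \cite[Lemma 4.1, Steps 1--3]{Mur} for the localization/Nagata/completion steps and \cite{BouAC89} for the gonflement. However, two of your justifications do not hold as stated. First, the ``faithfully flat with geometrically regular fibers'' framework is not available for your chain of maps: in characteristic $p>0$ the gonflement induces a purely inseparable residue field extension, so its closed fiber is not even geometrically reduced; the completion of a (possibly non-excellent) regular local ring need not have geometrically regular fibers either; and \cite[Proposition 5.1.6]{SH06} bounds analytic spread by dimension and says nothing about base change. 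What one actually has is mere flatness together with $\Ass_{R'}(R'/IR')=\bigcup_{\fp\in\Ass_R(R/I)}\Ass_{R'}(R'/\fp R')$ \cite[Theorem 23.2]{Mat89}, which yields $I^{(N)}R'\subseteq (IR')^{(N)}$ and, once the residue fields at the relevant primes are infinite, the bound on analytic spreads by extending a minimal reduction. Relatedly, a gonflement is a filtered colimit and does not preserve completeness, so perfecting \emph{after} completing (your order) forces a second completion; the paper avoids this by doing the gonflement first and completing afterwards.

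Second, and more substantively, ``faithful flatness to pull the conclusion back to $R$'' is not enough by itself. Proving the theorems over $R'$ gives $u\in\prod_i (IR')^{(s_i+1)}$ (resp.\ $u\in\fm'\cdot\prod_i (IR')^{(s_i+1)}$), and faithful flatness only contracts \emph{extended} ideals, i.e.\ $JR'\cap R=J$ for $J\subseteq R$; to conclude $u\in\prod_i I^{(s_i+1)}$ one still needs a comparison in the direction $(IR')^{(k)}\subseteq I^{(k)}R'$ (or a substitute), and only the opposite containment is automatic for flat maps. This comparison must be verified separately for each of the specific extensions (localization, Nagata extension, gonflement, completion), which is precisely the content of the steps of \cite[Lemma 4.1]{Mur} that the paper cites; your proposal treats it as automatic. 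Once you either carry out these verifications for your chain of base changes or structure the argument as the paper does --- quoting \cite[Lemma 4.1, Steps 1 and 2]{Mur}, inserting the gonflement, and then completing and invoking \cite[Lemma 4.1, Step 3]{Mur} --- the proof is complete.
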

\begin{proof}
  By \cite[Lemma 4.1, Steps 1 and 2]{Mur}, we may assume that $R$ is a local
  ring $(R,\fm)$ such that the localizations of $R$ at the associated primes of
  $R/I$ have infinite residue fields.
  We now want to enlarge the residue field of $R$ to be perfect.
  By \cite[Chapitre IX, Appendice, n\textsuperscript{o} 2, Corollaire to
  Th\'eor\`eme 1]{BouAC89}, there exists a gonflement $(R,\fm) \to
  (R',\fm')$ where $R'/\fm'$ is a perfect closure of $R/\fm$.
  This map is a flat local map such that $R'$ is regular
  \cite[Chapitre IX, Appendice, n\textsuperscript{o} 2, Proposition 2]{BouAC89}.
  We now consider the composition $R \to R' \to \widehat{R'}$,
  where the second map is the $\fm'$-adic completion map.
  This map is faithfully flat, and
  \cite[Lemma 4.1, Step 3]{Mur}
  shows that we may replace $R$ by $\widehat{R'}$.
\end{proof}
Next, we prove the following closure-theoretic analogue of \cite[Theorem
D]{Mur}.
\begin{proposition}\label{prop:elshhmsmd}
  Let $(R,\fm)$ be a complete local domain.
  Consider an ideal $I \subseteq R$.
  Let $u \in I^{(M)}$ for an integer $M > 0$, and fix non-negative integers
  $s_1,s_2,\ldots,s_n$.
  \begin{enumerate}[label=$(\roman*)$,ref=\roman*]
    \item\label{prop:elshhmsmdrprime}
      Denote by $R'$ the integral closure of
      \[
        R[u^{1/M}] \subseteq \Frac(R)[u^{1/M}]
      \]
      in $\Frac(R)[u^{1/M}]$,
      where $u^{1/M}$ is a choice of $M$-th root of $u$ in an algebraic closure
      of $\Frac(R)$.
      Then, $R'$ is a complete normal local domain $(R',\fm')$.
      Moreover, if $R/\fm$ is $F$-finite and of characteristic $p > 0$, then
      $R'/\fm'$ is $F$-finite and of characteristic $p > 0$.
  \end{enumerate}
  Now assume that the localizations of $R$ at
  the associated primes of $R/I$ have infinite residue fields.
  Let $h$ be the largest analytic spread of $IR_\fp$, where $\fp$ ranges
  over all associated primes $\fp$ of $R/I$.
  \begin{enumerate}[resume,label=$(\roman*)$,ref=\roman*]
    \item\label{prop:elshhmsmdepf}
      Fix a closure operation $\cl$ on $R$ for which the
      Brian\c{c}on--Skoda-type theorem $(\ref{axiom:brianconskoda})$ holds.
      Let $\fp_\ell \in \Ass_R(R/I)$.
      Then, there exists $c_\ell \in R - \fp_\ell$ such that for every $i$, we
      have
      \[
        c_\ell u^{\frac{s_i+h}{M}} \in (I^{s_i+1}R')^\cl.
      \]
    \item\label{prop:elshhmsmdbcm}
      Suppose that if $R$ is of mixed characteristic, then $R/\fm$ is
      $F$-finite and of characteristic $p > 0$.
      Then, there exists a big Cohen--Macaulay algebra $B$ over $R'$
      such that the following holds:
      For every $\fp_\ell \in \Ass_R(R/I)$, there exists $c_\ell \in R -
      \fp_\ell$ such that for every $i$, we have
      \[
        c_\ell u^{\frac{s_i+h}{M}} \in I^{s_i+1}B.
      \]
      Moreover, if $R$ is regular, then for every $i$, we have
      \[
        u^{\frac{s_i+h}{M}} \in I^{(s_i+1)}B.
      \]
  \end{enumerate}
\end{proposition}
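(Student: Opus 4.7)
The plan is to address parts (i), (ii), and (iii) in order, generalizing the sketch from the introduction to allow nontrivial $s_i$.

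For (i), the element $u^{1/M}$ satisfies the monic equation $T^M - u = 0$, so $R[u^{1/M}]$ is module-finite over $R$. Since complete Noetherian local rings are Nagata, the integral closure $R'$ is also module-finite over $R$, hence complete and semi-local. Being contained in the field $\Frac(R)(u^{1/M})$, it is a domain; a complete semi-local domain is local (idempotents lift via Hensel, forcing a single factor). Normality holds by construction. For $F$-finiteness, $R'/\fm R'$ is a finite-dimensional $R/\fm$-algebra, so $R'/\fm'$ is a finite field extension of $R/\fm$, and finite extensions of $F$-finite fields remain $F$-finite.

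For (ii), Lemma \ref{lem:symbolicdefs} provides $c_\ell \in R - \fp_\ell$ with $c_\ell u \in I^M$. The infinite residue field at $\fp_\ell$ together with the bound on analytic spread lets us choose $J_\ell = (f_1,\ldots,f_h) \subseteq I$ with $f_j \in R$ such that $J_\ell R_{\fp_\ell}$ is a reduction of $IR_{\fp_\ell}$. Integrality then gives $I^MR_{\fp_\ell} \subseteq \overline{J_\ell^M R_{\fp_\ell}} = \overline{J_\ell^M}\,R_{\fp_\ell}$ (integral closure of ideals commutes with localization in Noetherian rings). Clearing denominators by enlarging $c_\ell$ within $R - \fp_\ell$, we arrange $c_\ell u \in \overline{J_\ell^M}$ in $R$. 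Under the implicit hypothesis $M \ge s_i+h$ (which holds in every application to Theorems \ref{thm:elshhmsmb} and \ref{thm:elshhmsmc}, where $M = s+nh$),
\[
  (c_\ell u^{(s_i+h)/M})^M = c_\ell^M u^{s_i+h} = c_\ell^{M-(s_i+h)} (c_\ell u)^{s_i+h} \in \overline{J_\ell^M}^{\,s_i+h} \subseteq \overline{J_\ell^{M(s_i+h)}}.
\]
Extending to $R'$ and invoking the valuation criterion in the normal domain $R'$ (an element whose $M$-th power lies in $\overline{K^M R'}$ must itself lie in $\overline{KR'}$), we conclude $c_\ell u^{(s_i+h)/M} \in \overline{J_\ell^{s_i+h} R'}$. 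The Brian\c{c}on--Skoda-type theorem applied to $J_\ell R'$, which is generated by at most $h$ elements, then yields $\overline{J_\ell^{s_i+h} R'} \subseteq (J_\ell^{s_i+1} R')^\cl \subseteq (I^{s_i+1} R')^\cl$.

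For (iii), since $R'/\fm'$ is $F$-finite in mixed characteristic by (i), the closure $\cl$ is a Dietz closure satisfying R.G.'s algebra axiom on $R'$, so Proposition \ref{prop:rg41} produces a big Cohen--Macaulay $R'$-algebra $B$ with $(\cdot)^\cl \subseteq (\cdot)^{\cl_B}$, and combined with (ii) this gives $c_\ell u^{(s_i+h)/M} \in I^{s_i+1}B$. For the \emph{moreover}, $R$ regular and $B$ big Cohen--Macaulay force $R \to B$ to be faithfully flat. Combining the $c_\ell$'s by the prime-avoidance construction used in the proof of Lemma \ref{lem:symbolicdefs} produces a single $c \in R - \bigcup_\ell \fp_\ell$ with $c\, u^{(s_i+h)/M} \in I^{s_i+1}B$ for all $i$. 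Because $\Ass_R(R/I^{(s_i+1)}) \subseteq \Ass_R(R/I) = \{\fp_\ell\}$, this $c$ is a nonzerodivisor on $R/I^{(s_i+1)}$, and flatness transfers this to $B/I^{(s_i+1)}B$. Since $I^{s_i+1}B \subseteq I^{(s_i+1)}B$, we conclude $u^{(s_i+h)/M} \in I^{(s_i+1)}B$.

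The main obstacle is step (ii): bridging the $R$-level fact $c_\ell u \in I^M$ (with $I$ possibly requiring many generators) to an $R'$-level integral dependence statement for $u^{(s_i+h)/M}$ over an ideal generated by at most $h$ elements. The decisive ingredients are (a) the reduction $J_\ell R_{\fp_\ell}$ together with commutation of integral closure with localization, which lets us transport the ``$h$-generated'' property back to $R$, and (b) the valuation criterion in the normal domain $R'$, which lets us extract $M$-th roots inside integral closures of ideals.
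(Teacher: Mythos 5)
Your overall route is the same as the paper's: use the infinite residue field at $\fp_\ell$ to pick an $h$-generated reduction $J_\ell \subseteq I$, extract roots inside integral closures of ideals over $R'$, apply the Brian\c{c}on--Skoda axiom to $J_\ell R'$, pass to a big Cohen--Macaulay algebra via Proposition \ref{prop:rg41}, and finish with faithful flatness of $R \to B$ over the regular ring $R$. The one substantive issue is your ``implicit hypothesis'' $M \ge s_i+h$ in step (ii): it is not part of the statement, and your computation $(c_\ell u^{(s_i+h)/M})^M = c_\ell^{M-(s_i+h)}(c_\ell u)^{s_i+h}$ genuinely fails when $M < s_i+h$. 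It is also unnecessary: raise the multiplier to the $(s_i+h)$-th power, i.e.\ consider $c_\ell^{s_i+h}u^{(s_i+h)/M}$, whose $M$-th power equals $(c_\ell^{M}u)^{s_i+h} \in (\overline{J_\ell^{M}})^{s_i+h} \subseteq \overline{(J_\ell^{s_i+h})^{M}}$, since $c_\ell^{M}u = c_\ell^{M-1}(c_\ell u) \in \overline{J_\ell^{M}}$ for every $M \ge 1$. This is exactly the paper's device (its multiplier is $x_\ell^{s_i+h}$, with the reduction statement used over $R'_{\fp_\ell}$ after the root extraction rather than downstairs in $R$ as you do), and with it the proposition holds as stated for all $M>0$, the final $c_\ell$ being a power of yours; your variant using $\overline{J_\ell^{M}}$ in $R$ plus the valuative criterion in $R'$ is otherwise a harmless reshuffling.

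Two smaller points in (iii). The containment $\Ass_R(R/I^{(s_i+1)}) \subseteq \Ass_R(R/I)$ you assert is not what holds in general; what you need (and what is true, since $R/I^{(s_i+1)}$ embeds into $R_W/I^{s_i+1}R_W$) is that every associated prime of $R/I^{(s_i+1)}$ is contained in some maximal element $\fp_\ell$ of $\Ass_R(R/I)$, which already makes $c \notin \bigcup_\ell \fp_\ell$ a nonzerodivisor on $R/I^{(s_i+1)}$. And before invoking faithful flatness of $R \to B$ you should record that $B$ is big Cohen--Macaulay over $R$ itself, because a system of parameters of $R$ remains a system of parameters of the module-finite extension $R'$; the paper states this step explicitly.
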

\begin{proof}
  We first prove $(\ref{prop:elshhmsmdrprime})$.
  The ring $R'$ is module-finite over $R$ and is complete local by
  \cite[Chapitre 0, Th\'eor\`eme 23.1.5 and Corollaire 23.1.6]{EGAIV1}.
  If $R/\fm$ is $F$-finite and of characteristic $p > 0$, then $R'/\fm'$ is
  $F$-finite and of characteristic $p > 0$
  because the field extension $R/\fm \hookrightarrow R'/\fm'$ is a finite field
  extension.\smallskip
  \par Next, we prove $(\ref{prop:elshhmsmdepf})$.
  Fix $\fp_\ell \in \Ass_R(R/I)$.
  Since the residue field of $R_{\fp_\ell}$ is infinite, there exists an ideal
  $J \subseteq I$ with at most $h$ generators such that $JR_{\fp_\ell}$ is a
  reduction of $IR_{\fp_\ell}$ by \cite[Proposition 8.3.7]{SH06}.
  By \cite[Lemma 8.1.3(1)]{SH06}, we also know that $JR'_{\fp_\ell}$ is a
  reduction of $IR'_{\fp_\ell}$, and
  by \cite[Proposition 8.1.5]{SH06}, we know that
  $J^{s_i+h}R_{\fp_\ell}'$ is a reduction of $I^{s_i+h}R'_{\fp_\ell}$ for all
  $i$.
  \par Since $u \in I^{(M)}$, there exists an element $x_\ell \in R - \fp_\ell$
  such that $x_\ell u \in I^M$.
  We then have
  \begin{align*}
    \bigl(x_\ell^{s_i+h}u^{\frac{s_i+h}{M}}\bigr)^{M} =
    x_\ell^{M(s_i+h)}u^{s_i+h}
    &= (x_\ell^M u)^{s_i+h}
    \in (I^{s_i+h}R')^{M},
  \intertext{and hence}
    x_\ell^{s_i+h}u^{\frac{s_i+h}{M}} &\in
    \overline{I^{s_i+h}R'}.
  \intertext{Since $\overline{J^{s_i+h}R_{\fp_\ell}'} =
  \overline{I^{s_i+h}R'_{\fp_\ell}}$, there exists an element $y_{\ell,i}
  \in R-
  \fp_\ell$ such that}
  y_{\ell,i} x_\ell^{s_i+h}u^{\frac{s_i+h}{M}} &\in
    \overline{J^{s_i+h}R'}.
  \end{align*}
  By the Brian\c{c}on--Skoda-type theorem $(\ref{axiom:brianconskoda})$,
  we have
  \[
    y_{\ell,i} x_\ell^{s_i+h}u^{\frac{s_i+h}{M}} \in
    (J^{s_i+1}R')^\cl \subseteq (I^{s_i+1}R')^\cl,
  \]
  where the inclusion on the right holds by the order-preservation axiom
  $(\ref{axioms:dietz3})$.
  Setting $y_\ell = y_{\ell,1}y_{\ell,2}\cdots y_{\ell,n}$, we therefore have
  \[
    y_\ell x_\ell^{s+h}u^{\frac{s_i+h}{M}} \in (I^{s_i+1}R')^\cl
  \]
  for all $i$.
  Setting $c_\ell = y_\ell x_\ell^{s+h}$, we obtain
  $(\ref{prop:elshhmsmdepf})$.\smallskip
  \par Finally, we show $(\ref{prop:elshhmsmdbcm})$.
  Fix a Dietz closure $\cl$ on $R$ satisfying R.G.'s algebra axiom and the
  Brian\c{c}on--Skoda-type theorem $(\ref{axiom:brianconskoda})$.
  Note that such a closure operation exists in all characteristics by Table
  \ref{tab:dietzclosures}.
  By $(\ref{prop:elshhmsmdepf})$, there exist $c_\ell \in R - \fp_\ell$ such
  that
  \begin{align*}
    c_\ell u^{\frac{s_i+h}{M}} &\in (I^{s_i+1}R')^\cl
    \intertext{for all $i$.
    By R.G.'s result stating that Dietz closures satisfying R.G.'s algebra
    axiom are contained in a big Cohen--Macaulay algebra closure
    (Proposition \ref{prop:rg41}),
    there exists a big Cohen--Macaulay algebra $B$
    over $R'$ such that}
    c_\ell u^{\frac{s_i+h}{M}} &\in I^{s_i+1}B.
  \end{align*}
  This proves the first statement in $(\ref{prop:elshhmsmdbcm})$.
  \par For the second statement in $(\ref{prop:elshhmsmdbcm})$,
  for each $\ell$ such that $\fp_\ell$ is maximal in $\Ass_R(R/I)$ with respect
  to inclusion, choose $d_\ell \in \fp_\ell - \bigl(\bigcup_{j \ne \ell}
  \fp_j \bigr)$, which is possible by prime avoidance.
  As in the proof of Lemma \ref{lem:symbolicdefs}, we have
  \[
    c = \sum_{\ell} \biggl(c_\ell \prod_{j \ne \ell}d_j\biggr) \notin
    \bigcup_\ell \fp_\ell.
  \]
  We also have
  \[
    cu^{\frac{s_i+h}{M}} \in I^{s_i+1}B \subseteq I^{(s_i+1)}B.
  \]
  We now note that $B$ is a big Cohen--Macaulay algebra over $R$, since
  every system of parameters in $R$ maps to a system of parameters in $R'$.
  Since $R$ is regular, $R \to B$ is therefore faithfully flat by \cite[Lemma
  2.1$(d)$]{HH95}.
  Thus, since $c$ is a nonzerodivisor on $R/I^{(s_i+1)}$ by \cite[Theorem
  6.1$(ii)$]{Mat89}, it is also a
  nonzerodivisor on $B/I^{(s_i+1)}B$.
  We therefore see that for every $i$, we have
  \[
    u^{\frac{s_i+h}{M}} \in I^{(s_i+1)}B.\qedhere
  \]
\end{proof}
\begin{remark}\label{rem:weakalgaxiom}
  The proof of Proposition \ref{prop:elshhmsmd}$(\ref{prop:elshhmsmdbcm})$ in
  fact applies to any closure operation satisfying a Brian\c{c}on--Skoda-type
  theorem $(\ref{axiom:brianconskoda})$ and for which the
  following property holds for all inclusions $Q\subseteq M$ of finitely
  generated $R$-modules:
  \begin{enumerate}[label=$(\star)$,ref=\star]
    \item\label{axiom:weakalg}
      If $u \in Q^\cl_M$, then there exists a big Cohen--Macaulay
      algebra $B$ over $R$ such that $1 \otimes u \in \im(B \otimes_R Q \to
      B \otimes_R M)$.
  \end{enumerate}
  The property $(\ref{axiom:weakalg})$
  holds for all Dietz closures satisfying R.G.'s algebra axiom
  $(\ref{axiom:rgalg})$ by 
  R.G.'s result stating that Dietz closures satisfying R.G.'s algebra
  axiom are contained in a big Cohen--Macaulay algebra closure
  (Proposition \ref{prop:rg41}),
  but holds for the other closure
  operations listed in Table \ref{tab:dietzclosures} as well.
  \begin{enumerate}[label=$(\roman*)$]
    \item The property $(\ref{axiom:weakalg})$ holds for big (and hence also
      small) equational tight closure on all
      Noetherian rings of equal characteristic zero \cite[Theorem 11.4]{Hoc94}.
      See \cite[Definition 3.4.3$(b)$ and (4.6.1)]{HHchar0} for definitions of
      these closure operations.
    \item The property $(\ref{axiom:weakalg})$ holds for $\epf$ closure for
      complete local domains of mixed characteristic with $F$-finite residue
      field.
      This follows from the fact that $\wepf$ closure is a Dietz closure
      satsifying R.G.'s algebra axiom $(\ref{axiom:rgalg})$
      \cite[Theorem 4.8]{Jia21} since $Q_M^\epf \subseteq Q_M^\wepf$.
    \item A stronger version of $(\ref{axiom:weakalg})$ holds for tight
      closure on all analytically irreducible excellent local domains of
      equal characteristic $p > 0$ \cite[Theorem 11.1]{Hoc94}.
  \end{enumerate}
  We can therefore use these closure operations and
  property $(\ref{axiom:weakalg})$ instead of Proposition
  \ref{prop:rg41} to prove Theorems \ref{thm:elshhmsmb} and \ref{thm:elshhmsmc}
  below.
\end{remark}
Finally, we can prove Theorems \ref{thm:elshhmsmb} and \ref{thm:elshhmsmc}.
\begin{proof}[Proof of Theorems \ref{thm:elshhmsmb} and \ref{thm:elshhmsmc}]
  By Lemma \ref{lem:41ffin}, we may assume that $R$ is a complete regular local
  ring $(R,\fm)$ with a perfect residue field,
  and that the localizations of $R$ at the associated primes of $R/I$ have
  infinite residue fields.\smallskip
  \par We start with Theorem \ref{thm:elshhmsmb}.
  Setting $M = s+nh$ in 
  Proposition \ref{prop:elshhmsmd}$(\ref{prop:elshhmsmdbcm})$, for $u \in
  I^{(s+nh)}$, we have
  \[
    u^{\frac{s_i+h}{s+nh}} \in I^{(s_i+1)}B
  \]
  for some big Cohen--Macaulay algebra $B$ over $R$.
  Multiplying together these inclusions for every $i$, we have
  \[
    u \in \prod_{i=1}^n I^{(s_i+1)}B.
  \]
  Since $R$ is regular, $R \to B$ is faithfully flat by \cite[Lemma
  2.1$(d)$]{HH95}, and hence we have
  \[
    u \in \prod_{i=1}^n I^{(s_i+1)}.\smallskip
  \]
  \par We now prove Theorem \ref{thm:elshhmsmc}.
  Setting $M = s+nh+1$ in 
  Proposition \ref{prop:elshhmsmd}$(\ref{prop:elshhmsmdbcm})$, for $u \in
  I^{(s+nh+1)}$, we have
  \[
    u^{\frac{s_i+h}{s+nh+1}} \in I^{(s_i+1)}B
  \]
  for some big Cohen--Macaulay algebra $B$ over $R$.
  Multiplying together these inclusions for every $i$, we have
  \[
    u^{\frac{s+nh}{s+nh+1}} \in \prod_{i=1}^n I^{(s_i+1)}B.
  \]
  \par Let $l$ be the largest integer for which the composition
  \[
    R \longrightarrow B \xrightarrow{u^{\frac{l}{s+nh+1}} \cdot -} B
  \]
  splits as a map of $R$-modules.
  We claim such an $l$ exists.
  When $l = 0$, the map $R \to B$ is faithfully
  flat, hence pure \cite[Theorem 7.5$(i)$]{Mat89}.
  Thus, the map $R \to B$ splits by an argument of Auslander \cite[p.\
  59]{HH90}.
  \par We now have
  \[
    u^{\frac{l}{s+nh+1}}u =
    u^{\frac{l+1}{s+nh+1}}u^{\frac{s+nh}{s+nh+1}} \in u^{\frac{l+1}{s+nh+1}}
    \cdot \prod_{i=1}^n I^{(s_i+1)}B.
  \]
  Applying $\varphi \in \Hom_R(B,R)$, we have
  \[
    \varphi\Bigl(u^{\frac{l}{s+nh+1}}\Bigr) \cdot u =
    \varphi\Bigl(u^{\frac{l+1}{s+nh+1}}u^{\frac{s+nh}{s+nh+1}}\Bigr)
    \in
    \varphi\Bigl(u^{\frac{l+1}{s+nh+1}}\Bigr) \cdot \prod_{i=1}^n
    I^{(s_i+1)}.
  \]
  Summing over all $\varphi$, we obtain
  \[
    u \in \fm \cdot \prod_{i=1}^n I^{(s_i+1)}
  \]
  as claimed, since $\varphi(u^{\frac{l+1}{s+nh+1}}) \in \fm$ for all
  $\varphi \in \Hom_R(B,R)$ by the assumption on $l$.
\end{proof}

\end{document}